\documentclass[12pt, a4paper]{amsart}
\usepackage[utf8]{inputenc}
\usepackage{amsthm}
\usepackage{amssymb}
\usepackage[shortlabels]{enumitem}
\setlist[enumerate]{topsep = 0mm, label = \normalfont(\roman*), listparindent=0mm, parsep=1mm}

\usepackage{xcolor, soul}
\definecolor{darkblue}{rgb}{0,0,0.6}
\usepackage[breaklinks, pdftex, ocgcolorlinks,colorlinks=true, citecolor=darkblue, filecolor=darkblue, linkcolor=darkblue, urlcolor=violet, linktocpage=true]{hyperref}

\usepackage{etoolbox}
\patchcmd{\thebibliography}{\leftmargin\labelwidth }{\leftmargin\labelwidth\labelsep=20pt}{}{}

\newtheorem{thm}{Theorem}[section]
\newtheorem{lemma}[thm]{Lemma}

\newtheorem{cor}[thm]{Corollary}
\theoremstyle{definition}
\newtheorem{defn}[thm]{Definition}

\newtheorem{qu}[thm]{Question}

\DeclareMathOperator{\Aut}{\operatorname{Aut}}

\parskip   3mm
\parindent 0mm

\title{The Wreath Product of Semiprime Skew Braces is Semiprime}

\author{Patrick I. Kinnear} 
\address{School of Mathematics, University of Edinburgh, James Clerk Maxwell Building, Peter Guthrie Tait Rd, Edinburgh EH9 3FD}
\email{P.Kinnear@sms.ed.ac.uk}

\keywords{Braces, Skew braces, Yang-Baxter equation, Semiprime}

\subjclass[2010]{Primary: 16T20. Secondary: 81R50}

\begin{document}

\maketitle

\begin{abstract}
In this note, we show that the wreath product of two semiprime skew braces is also a semiprime skew brace.
\end{abstract}

\section{Introduction}
The Yang-Baxter equation (YBE) is an equation of importance to several areas of research in mathematics and physics. An overview of the YBE can be found in Sections 1 and 2 of \cite{nichita2012introduction}. An algebraic structure known as a skew brace was introduced in \cite{guarnieri2017skew} to study a particular class of solutions of the YBE, called non-degenerate set-theoretic solutions. The results of \cite{bachiller2018solutions} fully reduce the classification of these solutions to the classification of skew braces, giving strong motivation for the study of skew braces and their structure.

Skew braces are a generalisation of Jacobson radical rings. As such, many ring-theoretic notions have useful analogues in the context of skew braces. The notion of an ideal has been defined for skew braces, and from this we may define several ring-theoretic properties such as being simple, nil, nilpotent, prime and semiprime. A detailed investigation of such properties of skew braces was initiated in \cite{konovalov2018skew}, and several questions were posed regarding prime and semiprime skew braces.

It is known that the property of being right nilpotent is preserved by wreath products of left braces (see Theorem 8.7 of \cite{gatevaivanova2009multipermutation} and Lemma 22 of \cite{smoktunowicz2018note}). In this note we will show that the wreath product of two semiprime skew braces is also a semiprime skew brace (Theorem \ref{t:wreath-semi}). This gives us one way to obtain new semiprime skew braces from old.

This result is proved in Section \ref{s:main}, and builds on the result that the semidirect product of semiprime skew braces is semiprime, which is established following some preliminary definitions in Section \ref{s:prelim}.

\section{Preliminaries}
\label{s:prelim}
In this section we give some key definitions and basic results, beginning with the definiton of a skew brace.

\begin{defn}
\label{d:skew-brace}
A \emph{skew brace} is a triple $(A, +, \circ)$ such that $(A, +)$ and $(A, \circ)$ are groups and the relation
\[
a \circ (b + c) = a \circ b - a + a \circ c
\]
holds for all $a, b, c \in A$.
\end{defn}

We denote the identities of $(A, +)$ and $(A, \circ)$ by $0$ and $1$ respectively. It can be checked that in any skew brace, $0 = 1$. For $a \in A$, the inverse in $(A, \circ)$ is written $a^{-1}$. Note that neither $(A, +)$ nor $(A, \circ)$ need be abelian: in the case where $(A, +)$ is abelian, then we have a structure called a left brace as first introduced by Rump in \cite{rump2007braces}. We remark that the morphisms for skew braces are the natural ones, and that a sub skew brace is simply a subset which is a subgroup with resepct to both operations $+$ and $\circ$.

It is reasonable to consider how we might obtain new skew braces from old. In this note we are concerned with the wreath product of skew braces, which is an instance of a semidirect product. This construction was defined for left braces in \cite{rump2008semidirect}, and generalised to skew braces in Corollary 3.37 of \cite{smoktunowicz2018skew}.

\begin{defn}
Let $(G, +, \circ), (H, + , \circ)$ be skew braces and $\sigma \colon H \to \Aut_{\text{Sk}}(G)$ be a group homomorphism from $(H, \circ)$ to the group of skew brace automorphisms of $G$. Then the \emph{semidirect product} $G \rtimes H$ of $G$ and $H$ \emph{via} $\sigma$ is the set $G \times H$ equipped with the following addition
\[
(g_1, h_1) + (g_2, h_2) = (g_1 + g_2, h_1 + h_2)
\]
and the following circle operation
\[
(g_1, h_1) \circ (g_2, h_2) = (g_1 \circ \sigma(h_1)(g_2), h_1 \circ h_2)
\]
for all $(g_1, h_1),(g_2, h_2) \in G \times H$.
\end{defn}

The wreath product of left braces was investigated in Corollaries 3.5 and 3.6 of \cite{cedo2010involutive}, and the construction was generalised to skew braces in Corollary 3.39 of \cite{smoktunowicz2018skew}.

\begin{defn}
\label{d:wreath-product}
Let $G, H$ be skew braces and consider the set
\[
W = \{f \colon H \to G \text{ such that } |\{h \in H : f(h) \neq 1\}| < \infty \}
\]
as a skew brace when we define addition and circle operations as
\begin{align*}
(f_1 + f_2)(h) &= f_1(h) + f_2(h)\\
(f_1 \circ f_2)(h) &= f_1(h) \circ f_2(h)
\end{align*}
for $f_1, f_2 \in W, h \in H$. Then the \emph{wreath product} of $G$ and $H$ is the skew brace $W \rtimes H$, with the action of $H$ on $W$ given by $\sigma \colon H \to \Aut_{\text{Sk}}(W)$ defined by $\sigma(h)(f)(x) = f(h \circ x)$ for all $x, h \in H, f \in W$. The wreath product of $G$ and $H$ is denoted $G \wr H$.
\end{defn}

Given a skew brace $A$ and an element $a \in A$, we can consider the map $\lambda_a \colon A \to A$ given by $\lambda_a (b) = -a + a \circ b$. Using this map, we define the notion of an ideal in a skew brace.

\begin{defn}
An \emph{ideal} of a skew brace $A$ is a normal subgroup $I$ of $(A, \circ)$ such that $\lambda_a(I) \subseteq I$ and $a + I = I + a$ for all $a \in A$.
\end{defn}

As noted in Remark 1.8 of \cite{guarnieri2017skew}, we can write $a \circ b = a + \lambda_a(b)$ for $a, b \in A$, from which we see that for $I$ an ideal of $A$ then $a \circ I = a + \lambda_a(I) \subseteq a + I$. Similarly, using the relation of Definition \ref{d:skew-brace}, we can write
\[
a + b = a \circ a^{-1} \circ (a + b) = a \circ (-a^{-1} + a^{-1} \circ b) = a \circ \lambda_{a^{-1}}(b).
\]
As above we see that $a + I \subseteq a \circ I$, so in fact $a \circ I = a + I$ where $I$ is an ideal of $A$ and $a \in A$. Moreover, as in Lemma 2.3 of \cite{guarnieri2017skew}, $I$ is also a normal subgroup of $(A, +)$. Since the cosets of $I$ with respect to $\circ$ and $+$ coincide, we may define quotient skew braces as follows.

\begin{defn}
Let $A$ be a skew brace and $I \subseteq A$ an ideal of $A$. Then the set of cosets of $I$ in $A$ can be made into a skew brace $A/I$ with operations given by
\begin{align*}
    (a + I) + (b + I) &= (a + b) + I\\
    (a + I) \circ (b + I) &= (a \circ b) + I
\end{align*}
for $a, b \in A$.
\end{defn}

Given a skew brace $(A, +, \circ)$, we may define an operation $*$ on $A$ given by $a*b = \lambda_a(b) - b$ for $a, b \in A$. For $B, C \subseteq A$, we write $B * C$ for the sub skew brace generated by (that is, the minimal sub skew brace containing) the set $\{b*c \colon b \in B, c \in C \}$. In the case that $A$ is a Jacobson radical ring, $*$ is simply the ring multiplication. The operation $*$ allows us to define analogues of many ring-theoretic concepts: in particular, it allows us to define semiprime skew braces as in \cite{konovalov2018skew}.

\begin{defn}
A skew brace $A$ is said to be \emph{semiprime} if the only ideal $I$ of $A$ which has $I * I = 0$ is the zero ideal.
\end{defn}

A skew brace $A$ is called trivial if $A * A = 0$, and such skew braces always yield the trivial solution to the YBE. A semiprime skew brace is a skew brace in which the only trivial ideal is the zero ideal, and so all its nonzero ideals yield non-degenerate set-theoretic solutions of the YBE which are nontrivial.

We will finish this section by showing that the semidirect product of semiprime skew braces is semiprime. This follows as an easy corollary of the following result, which gives sufficient conditions for a skew brace to be semiprime and has an analogous proof to the corresponding ring-theoretic result.

\begin{lemma}
\label{l:semiprime-ses}
Let $A$ be a skew brace and $I \subseteq A$ an ideal of $A$. If $I$ and $A/I$ are semiprime skew braces, then so is $A$.
\end{lemma}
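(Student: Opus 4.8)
The plan is to transcribe the classical ring-theoretic argument. Let $J$ be an arbitrary ideal of $A$ with $J * J = 0$; the goal is to show $J = 0$. I would first record the elementary observation that $J * J = 0$ is equivalent to $j_1 * j_2 = 0$ for all $j_1, j_2 \in J$, since the sub skew brace generated by a set is zero exactly when every generator is $0$. Consequently the condition $J * J = 0$ does not depend on whether the products $j_1 * j_2$ are read inside $A$, inside the sub skew brace $I$, or in the quotient $A/I$, and I will exploit this throughout.

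Let $\pi \colon A \to A/I$ denote the quotient map. The first main step is to show $J \subseteq I$. I would check that $\pi(J)$ is an ideal of $A/I$: it is a normal subgroup of $(A/I, \circ)$ because $\pi$ is a surjective group homomorphism carrying the normal subgroup $J$ to a normal subgroup, and the remaining two axioms follow from $\lambda_{\pi(a)}(\pi(J)) = \pi(\lambda_a(J)) \subseteq \pi(J)$ and $\pi(a) + \pi(J) = \pi(a + J) = \pi(J + a) = \pi(J) + \pi(a)$. Since $\pi$ is a homomorphism for both operations we have $\pi(j_1) * \pi(j_2) = \pi(j_1 * j_2) = 0$ for all $j_1, j_2 \in J$, so $\pi(J) * \pi(J) = 0$. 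As $A/I$ is semiprime, $\pi(J) = 0$, i.e. $J \subseteq \ker \pi = I$.

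The second main step is to view $J$ as an ideal of $I$ and apply semiprimeness of $I$. Because $I$ is a sub skew brace, its operations $+$ and $\circ$ --- and hence the maps $\lambda$ and the operation $*$ --- are just the restrictions of those on $A$. Thus the facts that $J$ is normal in $(A, \circ)$, that $\lambda_a(J) \subseteq J$, and that $a + J = J + a$ for all $a \in A$ immediately give the same three properties for all $a \in I$, so $J$ is an ideal of $I$. Since the products $j_1 * j_2$ are unchanged, $J * J = 0$ still holds in $I$, and semiprimeness of $I$ yields $J = 0$. Therefore the only ideal $J$ of $A$ with $J * J = 0$ is the zero ideal, and $A$ is semiprime.

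I do not anticipate a genuine obstacle, as the proof runs exactly parallel to the ring case; the only points needing (routine) care are the compatibilities used above --- that $\pi$ sends ideals to ideals and commutes with $*$, and that an ideal of $A$ contained in $I$ is automatically an ideal of $I$ --- each of which reduces to $\lambda$ and $*$ being well behaved under quotients and under restriction to sub skew braces.
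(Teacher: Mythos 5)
Your proposal is correct and follows essentially the same route as the paper: push $J$ into the quotient (your $\pi(J)$ is exactly the paper's $J+I$), use semiprimeness of $A/I$ to get $J \subseteq I$, then observe that $J$ is an ideal of $I$ and apply semiprimeness of $I$. The extra compatibility checks you spell out (that $\pi$ commutes with $*$ and that ideals restrict to sub skew braces) are precisely what the paper leaves as ``straightforward to check.''
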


\begin{proof}
Let $J$ be an ideal of $A$. Then $J +  I$ is an ideal of $A/I$ and it is straightforward to check that $(J + I)*(J + I) \subseteq J*J + I$. If $J*J = 0$ then it follows from semiprimality of $A/I$ that $J + I = 0$ in $A/I$, so $J \subseteq I$. But then $J$ is an ideal of $I$, so $J*J = 0$ implies $J=0$. Therefore $A$ is semiprime.  
\end{proof}

\begin{cor}
\label{c:semidir}
Let $G$ and $H$ be semiprime skew braces. Then the semidirect product $G \rtimes H$ via $\sigma \colon H \to \Aut_{\text{Sk}} (G)$, is a semiprime skew brace.
\end{cor}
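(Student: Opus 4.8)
The plan is to realise $A = G \rtimes H$ as an extension to which Lemma \ref{l:semiprime-ses} applies. The natural candidate for the ideal is the canonical copy $I = \{(g, 0) : g \in G\}$ of $G$ inside $A$, where $0$ denotes the identity of $H$. I expect $I$ to be an ideal with $I \cong G$ and $A/I \cong H$ as skew braces, so that the semiprimality of $G$ and $H$ transfers to $A$ through the lemma.

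First I would verify that $I$ is an ideal of $A$. Since $(A, \circ)$ is the ordinary semidirect product of the groups $(G, \circ)$ and $(H, \circ)$, the set $I$ is a subgroup of $(A, \circ)$, and unwinding the formula for the $\circ$-inverse of $(g, h)$ and conjugating an element $(g_0, 0)$ confirms that it is normal. It then remains to check the two conditions $\lambda_a(I) \subseteq I$ and $a + I = I + a$. For the first, taking $a = (g, h)$ and $(g_0, 0) \in I$, a short computation using componentwise addition gives $\lambda_a((g_0, 0)) = (\lambda_g(\sigma(h)(g_0)), 0)$, which lies in $I$ because $\sigma(h)$ is a skew brace automorphism of $G$ and $\lambda_g$ maps $G$ into $G$. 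For the second, componentwise addition shows that both $a + I$ and $I + a$ equal $G \times \{h\}$, so they coincide.

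Next I would identify the subobject and the quotient. The inclusion $g \mapsto (g, 0)$ is a skew brace isomorphism onto $I$: it is additive by definition, and it respects $\circ$ because $\sigma(0) = \Id$, as $\sigma$ is a group homomorphism. Hence $I \cong G$ is semiprime. The projection $(g, h) \mapsto h$ is a surjective skew brace homomorphism $A \to H$ with kernel exactly $I$, whence $A/I \cong H$ is semiprime. Applying Lemma \ref{l:semiprime-ses} then yields that $A = G \rtimes H$ is semiprime.

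The computations here are all routine; the only point requiring care, and the main obstacle, is verifying that $I$ satisfies every clause of the definition of an ideal once the semidirect operations are substituted in, in particular confirming that the twist by $\sigma(h)$ appearing in $\lambda_a$ does not push $I$ outside itself. This is exactly the step where the hypothesis that $\sigma$ takes values in the skew brace automorphisms of $G$ is used.
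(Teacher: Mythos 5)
Your proposal is correct and follows exactly the paper's argument: take the ideal $I = \{(g,0) : g \in G\} \cong G$, identify $(G \rtimes H)/I \cong H$ via the projection, and apply Lemma \ref{l:semiprime-ses}. The only difference is that you spell out the verifications (normality, $\lambda_a(I) \subseteq I$, $a + I = I + a$, and $\sigma(0) = \Id$) that the paper dismisses as "easily checked," and these computations are all accurate.
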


\begin{proof}
It is known that $G \rtimes H$ is a skew brace. It is easily checked that $(G, 0) \cong G$ is an ideal of $G \rtimes H$, and that $(G \rtimes H)/(G, 0) \cong H$ as skew braces. The corollary then follows from Lemma \ref{l:semiprime-ses}.
\end{proof}

\section{The Wreath Product of Semiprime Skew Braces}
\label{s:main}
We will now extend Corollary \ref{c:semidir} to a result about the wreath product of semiprime skew braces. Given semiprime skew braces $G, H$, we begin by relating ideals in the skew brace $W$ of Definition \ref{d:wreath-product}, to ideals in $G$. This allows us to show in Lemma \ref{l:W-semi} that $W$ is semiprime. This, combined with Corollary \ref{c:semidir} yields Theorem \ref{t:wreath-semi} which states that $G \wr H$ is semiprime.

The lemma relating ideals of $G$ to ideals of $W$ depends on the easily verified fact that if $h \in H$, $g \in G$ and $f \in W$ are such that $g = f(h)$, then $g^{-1} = f^{-1}(h)$, for $f^{-1} \colon H \to G$ the inverse of $f$ in $(W, \circ)$.

\begin{lemma}
\label{l:W-proj}
Let $G, H$ be skew braces and $W$ be as in Definition \ref{d:wreath-product}, and $I$ an ideal of $W$. Then for any $h \in H$, the set
\[
\rho_{h}(I) = \{ g \in G : f(h) = g \text{ for some } f \in I \}
\]
is an ideal of $G$.
\end{lemma}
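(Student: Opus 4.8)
The plan is to recognise $\rho_h(I)$ as the image of $I$ under evaluation at $h$ and then to use the general principle that the image of an ideal under a surjective morphism of skew braces is again an ideal. Concretely, for fixed $h \in H$ I would introduce the evaluation map $e_h \colon W \to G$, $e_h(f) = f(h)$. Because both operations on $W$ are defined pointwise, we have $e_h(f_1 + f_2) = f_1(h) + f_2(h) = e_h(f_1) + e_h(f_2)$ and $e_h(f_1 \circ f_2) = e_h(f_1) \circ e_h(f_2)$, so $e_h$ is a skew brace homomorphism. It is surjective: for any $g \in G$ the function taking the value $g$ at $h$ and $1$ elsewhere has finite support, hence lies in $W$, and maps to $g$. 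By definition $\rho_h(I) = e_h(I)$, so it suffices to show that the image of the ideal $I$ under the surjection $e_h$ is an ideal of $G$.

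It then remains to check the defining conditions for $e_h(I)$, which I would do directly, writing an arbitrary element of $G$ as $e_h(f)$ for some $f \in W$ by surjectivity. Normality of $e_h(I)$ in $(G, \circ)$ follows from normality of $I$ in $(W, \circ)$ by conjugating inside $W$ and applying $e_h$. The condition $\lambda_{e_h(f)}(e_h(I)) \subseteq e_h(I)$ follows from the identity $\lambda_{e_h(f)}(e_h(f')) = e_h(\lambda_f(f'))$, which holds because $e_h$ respects both $+$ and $\circ$, combined with $\lambda_f(I) \subseteq I$. Finally the coset condition $e_h(f) + e_h(I) = e_h(I) + e_h(f)$ is obtained by applying $e_h$ to $f + I = I + f$. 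Each of these is a one-line computation once the homomorphism property is in hand.

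I do not expect a serious obstacle: the statement is essentially the observation that evaluation at a point is a surjective morphism, together with the routine transport of ideals along surjections. The only mild points to watch are that the finite-support restriction defining $W$ does not interfere with surjectivity (it does not, since the test functions needed have singleton support), and that one keeps the two operations straight when verifying the $\lambda$- and coset-conditions, using that $e_h$ preserves additive and circle inverses (the latter being exactly the remark, recorded just before the lemma, that $f(h) = g$ forces $f^{-1}(h) = g^{-1}$). If preferred, the fact that images of ideals under surjective skew brace homomorphisms are ideals could be isolated as a standalone lemma and then invoked, which is the cleaner route I would favour.
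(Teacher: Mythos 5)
Your proposal is correct, and it reaches the same conclusion by a more conceptual decomposition than the paper's. The paper verifies the ideal axioms for $\rho_h(I)$ by hand: it checks that $\rho_h(I)$ is a subgroup of $(G,\circ)$ (using the pre-lemma remark that $f(h)=g$ forces $f^{-1}(h)=g^{-1}$), proves normality by introducing the function $\alpha_x$ taking the value $x$ at $h$ and $1_G$ elsewhere and conjugating inside $W$, and then asserts that the $\lambda$- and coset-conditions follow ``in a similar way.'' You instead observe that evaluation $e_h\colon W\to G$ is a surjective skew brace homomorphism with $e_h(I)=\rho_h(I)$, and reduce everything to the general fact that the image of an ideal under a surjective skew brace morphism is an ideal. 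The underlying mechanics are identical: your surjectivity witness is exactly the paper's $\alpha_x$, and the paper's central manipulation $(\alpha_x\circ f_1\circ\alpha_x^{-1})(h)=\alpha_x(h)\circ f_1(h)\circ\alpha_x^{-1}(h)$ is precisely the statement that $e_h$ respects $\circ$. What your route buys: it isolates where surjectivity is genuinely needed (the normality, $\lambda$-, and coset-conditions all quantify over \emph{all} of $G$, so an arbitrary element must be lifted to $W$); it subsumes the paper's remark about inverses, since any $\circ$-homomorphism preserves inverses; and the transport lemma is reusable --- applied to the projection $W\rtimes H\to H$, for instance, it gives an analogous correspondence of ideals with no extra work. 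What the paper's route buys: it is self-contained and avoids stating a free-standing general lemma that appears nowhere in the note. The one obligation your write-up carries is that this general lemma must itself be proved, since it cannot simply be cited from the paper; your sketch already contains all three verifications (conjugation, the identity $\lambda_{e_h(f)}(e_h(f'))=e_h(\lambda_f(f'))$, and applying $e_h$ to $f+I=I+f$), so completing it is routine.
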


\begin{proof}
For convenience let $R = \rho_h(I)$. We begin by showing that $(R, \circ)$ is a subgroup of $(G, \circ)$. Note that the identity in $W$ is the function $0_W \colon H \to G$ such that $0_W(k) = 1_G$ for all $k\in H$. Then it is clear that $1_G \in R$.

Let $g_1, g_2 \in R$, so $g_1 = f_1(h), g_2 = f_2(h)$ for $f_1, f_2 \in I$. Then, as $I$ is an ideal of $W$, there is a function $f_1^{-1} \in I$, and we have $f_1^{-1}(h) = g_1^{-1} \in R$. It is easy to see that $g_1 \circ g_2 \in R$, and so we have that $(R, \circ)$ is a subgroup of $(G, \circ)$.

Now we show that $(R, \circ)$ is normal in $(G, \circ)$. Let $x \in G$, and define $\alpha_x \colon H \to G$ by
\[
\alpha_x(k) = \begin{cases}
x \text{ if } k = h\\
1_G \text{ otherwise}
\end{cases}
\]
so $\alpha_x \in W$. Then
\[
    x \circ g_1 \circ x^{-1} = \alpha_x(h) \circ f_1(h) \circ \alpha_x^{-1}(h) = (\alpha_x \circ f_1 \circ \alpha_x^{-1})(h) \in R
\]
since $(I, \circ)$ is a normal subgroup of $(W, \circ)$. This shows that $(R, \circ)$ is a normal subgroup of $(G, \circ)$.

In a similar way, we can easily show that $\lambda_a(R) \subseteq R$ and $a + R = R + a$, for any $a \in G$.  Therefore $R = \rho_{h}(I)$ is an ideal of $G$.
\end{proof}

Having associated ideals in $G$ to those in $W$, we may now observe the following regarding the skew brace $W$.

\begin{lemma}
\label{l:W-semi}
Let $G$ and $H$ be skew braces. If $G$ is semiprime, then the skew brace
\[
W = \{f \colon H \to G \text{ such that } |\{h \in H : f(h) \neq 1\}| < \infty \}
\]
is semiprime.
\end{lemma}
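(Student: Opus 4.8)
The plan is to reduce semiprimality of $W$ to that of $G$ by means of the projection ideals $\rho_h(I)$ supplied by Lemma \ref{l:W-proj}. The observation that makes everything work is that the operation $*$ on $W$ is computed pointwise, and this is the first thing I would record. Since $+$ and $\circ$ are defined pointwise, for $f_1, f_2 \in W$ and $h \in H$ one has $\lambda_{f_1}(f_2)(h) = -f_1(h) + f_1(h)\circ f_2(h) = \lambda_{f_1(h)}(f_2(h))$, whence
\[
(f_1 * f_2)(h) = \lambda_{f_1}(f_2)(h) - f_2(h) = f_1(h) * f_2(h).
\]

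Next I would take an ideal $I$ of $W$ with $I * I = 0$ and aim to show $I = 0$. Because $I * I$ is the sub skew brace generated by the elements $f_1 * f_2$ with $f_1, f_2 \in I$, the hypothesis $I*I = 0$ is equivalent to $f_1 * f_2 = 0_W$ for all such $f_1, f_2$; by the pointwise formula above this says exactly that $f_1(h) * f_2(h) = 0$ for every $h \in H$ and all $f_1, f_2 \in I$.

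I would then fix $h \in H$ and set $R = \rho_h(I)$, which is an ideal of $G$ by Lemma \ref{l:W-proj}. Every generator $g_1 * g_2$ of $R * R$ is of the form $f_1(h) * f_2(h) = (f_1 * f_2)(h)$ for suitable $f_1, f_2 \in I$, and hence vanishes; so $R * R = 0$, and semiprimality of $G$ forces $\rho_h(I) = 0$. As $h$ was arbitrary, every $f \in I$ is identically $1_G$, i.e.\ $f = 0_W$, so $I = 0$ and $W$ is semiprime.

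I do not expect a genuine obstacle here: the argument rests entirely on the pointwise description of $*$ together with Lemma \ref{l:W-proj}. The only point requiring care is the passage between a generated sub skew brace and its generators — used once to turn $I * I = 0$ into the pointwise conditions on $I$, and again to deduce $R * R = 0$ from the vanishing of its generators — for which one uses that a sub skew brace is zero precisely when all its generators vanish.
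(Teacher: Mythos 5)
Your proof is correct and is essentially the same as the paper's: both fix $h \in H$, use Lemma \ref{l:W-proj} to get the ideal $\rho_h(I)$ of $G$, exploit the pointwise computation $(f_1 * f_2)(h) = f_1(h) * f_2(h)$ to deduce $\rho_h(I) * \rho_h(I) = 0$, and conclude $\rho_h(I) = 0$ for all $h$, hence $I = 0$. Your extra care in spelling out the pointwise formula for $*$ and the passage between a generated sub skew brace and its generators only makes explicit what the paper's proof uses implicitly.
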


\begin{proof}
Let $I$ be an ideal of $W$ such that $I*I = 0$. Then for any $h \in H$ we may associate to $I$ the set $\rho_{h}(I)$ as in Lemma \ref{l:W-proj}, which by this lemma is an ideal of $G$.

Let $h \in H$ be fixed and consider $g_1, g_2 \in \rho_{h}(I)$, so $g_1 = f_1(h), g_2 = f_2(h)$ for some $f_1, f_2 \in I$. Then
\[
g_1 * g_2 = f_1(h) * f_2(h) = (f_1 * f_2)(h) = 0_W(h) = 1_G = 0_G
\]
which shows that $\rho_{h}(I) * \rho_{h}(I) = 0$. Since $G$ is semiprime, and $\rho_{h}(I)$ an ideal of $G$, we have that $\rho_{h}(I) = 0$.

Since $\rho_{h}(I) = 0$ for all $h \in H$, it follows that $I = 0$. This shows that $W$ is a semiprime skew brace.
\end{proof}

We now combine Corollary \ref{c:semidir} and Lemma \ref{l:W-semi} to give our final result concerning the wreath product of skew braces.

\begin{thm}
\label{t:wreath-semi}
If $G, H$ are semiprime skew braces, then their wreath product $G \wr H$ is also a semiprime skew brace.
\end{thm}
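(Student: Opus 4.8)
The plan is to recognise $G \wr H$ as a semidirect product and then assemble the two results established in the preliminaries. By Definition \ref{d:wreath-product}, the wreath product $G \wr H$ is by construction the semidirect product $W \rtimes H$, where $W$ is the skew brace of finitely-supported functions $H \to G$ and $H$ acts on $W$ via the homomorphism $\sigma$. Thus the task reduces to checking that the hypotheses of Corollary \ref{c:semidir} hold for this particular semidirect product, namely that both factors $W$ and $H$ are semiprime.

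First I would observe that $H$ is semiprime by assumption. For the factor $W$, I would invoke Lemma \ref{l:W-semi}: since $G$ is semiprime by hypothesis, that lemma gives at once that $W$ is a semiprime skew brace. With both $W$ and $H$ now known to be semiprime, I would apply Corollary \ref{c:semidir} to the semidirect product $W \rtimes H$ to conclude that it is semiprime. Since $G \wr H = W \rtimes H$ by definition, this completes the argument.

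Because the substantive work has already been carried out—relating ideals of $W$ to ideals of $G$ in Lemma \ref{l:W-proj}, deducing semiprimality of $W$ from that of $G$ in Lemma \ref{l:W-semi}, and establishing the semidirect-product result in Corollary \ref{c:semidir}—there is no genuine obstacle remaining at this final stage. The theorem follows by directly combining these ingredients, and the only point requiring any care is the identification of $G \wr H$ with the semidirect product $W \rtimes H$, which is immediate from Definition \ref{d:wreath-product}.
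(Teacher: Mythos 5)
Your proposal is correct and follows exactly the same route as the paper's own proof: identify $G \wr H$ with $W \rtimes H$ via Definition \ref{d:wreath-product}, deduce semiprimality of $W$ from Lemma \ref{l:W-semi}, and conclude with Corollary \ref{c:semidir}. No gaps; this is precisely the intended assembly of the preliminary results.
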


\begin{proof}
Let $G, H$ be semiprime skew braces. Then by Lemma \ref{l:W-semi}, we have that the skew brace 
\[
W = \{f \colon H \to G \text{ such that } |\{h \in H : f(h) \neq 1\}| < \infty \}
\]
is semiprime. Then by Corollary \ref{c:semidir}, the semidirect product $W \rtimes H$ is a semiprime skew brace. Then by definition, $G \wr H = W \rtimes H$ is a semiprime skew brace.
\end{proof}

We remark that the converse of Lemma \ref{l:W-semi} can be shown to hold, using a result similar to Lemma \ref{l:W-proj} to associate ideals in $W$ to ideals in $H$. Then, if the converse of Corollary \ref{c:semidir} were true, we would also be able to extend Theorem \ref{t:wreath-semi} to a biconditional statement. This leads us to ask: if the semidirect product of skew braces $G \rtimes H$ is semiprime, then are $G$ and $H$ also semiprime? It is clear that $H$ must be, but it is not known whether $G$ must be semiprime. This motivates the following question.

\begin{qu}
If $G, H$ are skew braces and their semidirect product $G \rtimes H$ via $\sigma$ is semiprime, is $G$ semiprime?
\end{qu}

\section{Acknowledgements}
The author would like to thank Agata Smoktunowicz for introducing him to the theory of braces, and for her feedback and encouragement in writing this note. The author also thanks Ivan Lau, Leandro Vendramin and the anonymous reviewers for their helpful comments on earlier versions of this manuscript.

\bibliographystyle{tf_USMAA}
\bibliography{main.bib}

\end{document}